\documentclass[11pt]{article}
\usepackage[margin=1in]{geometry} 
\geometry{letterpaper}   

\usepackage{amssymb,amsfonts,amsmath,bbm,mathrsfs,stmaryrd,mathtools}
\usepackage{xcolor}
\usepackage{url}

\usepackage{extarrows}

\usepackage[shortlabels]{enumitem}
\usepackage{tensor}

\usepackage[T1]{fontenc}

\usepackage[colorlinks,
linkcolor=black!75!red,
citecolor=blue,
pdftitle={},
pdfproducer={pdfLaTeX},
pdfpagemode=None,
bookmarksopen=true,
bookmarksnumbered=true,
backref=page]{hyperref}

\usepackage{tikz}
\usetikzlibrary{arrows,calc,decorations.pathreplacing,decorations.markings,intersections,shapes.geometric,through,fit,shapes.symbols,positioning,decorations.pathmorphing}

\usepackage{braket}

\usepackage[amsmath,thmmarks,hyperref]{ntheorem}
\usepackage{cleveref}

\newcommand\nthalias[1]{\AddToHook{env/#1/begin}{\crefalias{lemma}{#1}}}

\nthalias{definition}
\nthalias{example}
\nthalias{examples}
\nthalias{remark}
\nthalias{remarks}
\nthalias{convention}
\nthalias{notation}
\nthalias{construction}
\nthalias{sketch}
\nthalias{theoremN}
\nthalias{propositionN}
\nthalias{corollaryN}
\nthalias{lemma}
\nthalias{proposition}
\nthalias{corollary}
\nthalias{theorem}
\nthalias{conjecture}
\nthalias{question}
\nthalias{assumption}


\creflabelformat{enumi}{#2#1#3}

\crefname{section}{Section}{Sections}
\crefformat{section}{#2Section~#1#3} 
\Crefformat{section}{#2Section~#1#3} 

\crefname{subsection}{\S}{\S\S}
\AtBeginDocument{%
  \crefformat{subsection}{#2\S#1#3}%
  \Crefformat{subsection}{#2\S#1#3}%
}

\crefname{subsubsection}{\S}{\S\S}
\AtBeginDocument{%
  \crefformat{subsubsection}{#2\S#1#3}%
  \Crefformat{subsubsection}{#2\S#1#3}%
}

%

\theoremstyle{plain}

\newtheorem{lemma}{Lemma}[section]
\newtheorem{proposition}[lemma]{Proposition}
\newtheorem{corollary}[lemma]{Corollary}
\newtheorem{theorem}[lemma]{Theorem}


\theoremstyle{plain}
\theoremnumbering{Alph}
\newtheorem{theoremN}{Theorem}

\theoremstyle{plain}
\theorembodyfont{\upshape}
\theoremsymbol{\ensuremath{\blacklozenge}}

\newtheorem{definition}[lemma]{Definition}
\newtheorem{example}[lemma]{Example}

\newtheorem{remark}[lemma]{Remark}
\newtheorem{remarks}[lemma]{Remarks}

\crefname{definition}{definition}{definitions}
\crefformat{definition}{#2definition~#1#3} 
\Crefformat{definition}{#2Definition~#1#3} 

\crefname{example}{example}{examples}
\crefformat{example}{#2example~#1#3} 
\Crefformat{example}{#2Example~#1#3} 

\crefname{examples}{example}{examples}
\crefformat{examples}{#2example~#1#3} 
\Crefformat{examples}{#2Example~#1#3} 

\crefname{remark}{remark}{remarks}
\crefformat{remark}{#2remark~#1#3} 
\Crefformat{remark}{#2Remark~#1#3} 

\crefname{remarks}{remark}{remarks}
\crefformat{remarks}{#2remark~#1#3} 
\Crefformat{remarks}{#2Remark~#1#3} 

\crefname{convention}{convention}{conventions}
\crefformat{convention}{#2convention~#1#3} 
\Crefformat{convention}{#2Convention~#1#3} 

\crefname{notation}{notation}{notations}
\crefformat{notation}{#2notation~#1#3} 
\Crefformat{notation}{#2Notation~#1#3} 

\crefname{table}{table}{tables}
\crefformat{table}{#2table~#1#3} 
\Crefformat{table}{#2Table~#1#3}

\crefname{lemma}{lemma}{lemmas}
\crefformat{lemma}{#2lemma~#1#3} 
\Crefformat{lemma}{#2Lemma~#1#3} 

\crefname{proposition}{proposition}{propositions}
\crefformat{proposition}{#2proposition~#1#3} 
\Crefformat{proposition}{#2Proposition~#1#3} 

\crefname{corollary}{corollary}{corollaries}
\crefformat{corollary}{#2corollary~#1#3} 
\Crefformat{corollary}{#2Corollary~#1#3} 

\crefname{theorem}{theorem}{theorems}
\crefformat{theorem}{#2theorem~#1#3} 
\Crefformat{theorem}{#2Theorem~#1#3} 

\crefname{theoremN}{theorem}{theorems}
\crefformat{theoremN}{#2theorem~#1#3} 
\Crefformat{theoremN}{#2Theorem~#1#3} 

\crefname{enumi}{}{}
\crefformat{enumi}{#2#1#3}
\Crefformat{enumi}{#2#1#3}

\crefname{assumption}{assumption}{Assumptions}
\crefformat{assumption}{#2assumption~#1#3} 
\Crefformat{assumption}{#2Assumption~#1#3} 

\crefname{construction}{construction}{Constructions}
\crefformat{construction}{#2construction~#1#3} 
\Crefformat{construction}{#2Construction~#1#3} 

\crefname{equation}{}{}
\crefformat{equation}{(#2#1#3)} 
\Crefformat{equation}{(#2#1#3)}


\numberwithin{equation}{section}

\theoremstyle{nonumberplain}
\theoremsymbol{\ensuremath{\blacksquare}}

\newtheorem{proof}{Proof}
\newcommand\pf[1]{\newtheorem{#1}{Proof of \Cref{#1}}}
\newcommand\pff[3]{\newtheorem{#1}{Proof of \Cref{#2}#3}}

\newcommand\bC{{\mathbb C}}

\newcommand\bG{{\mathbb G}}
\newcommand\bH{{\mathbb H}}

\newcommand\bK{{\mathbb K}}
\newcommand\bL{{\mathbb L}}

\newcommand\bS{{\mathbb S}}

\newcommand\bU{{\mathbb U}}

\newcommand\bZ{{\mathbb Z}}

\newcommand\cC{{\mathcal C}}

\newcommand\fp{{\mathfrak p}}

\DeclareMathOperator{\id}{id}

\DeclareMathOperator{\im}{\mathrm{im}}
\DeclareMathOperator{\diag}{diag}

\DeclareMathOperator{\Irr}{Irr}
\DeclareMathOperator{\spn}{span}
\DeclareMathOperator{\Spec}{Spec}
\DeclareMathOperator{\cCL}{\mathcal{CL}}



\newcommand{\cat}[1]{\textsc{#1}}

\newcommand{\qedhere}{\mbox{}\hfill\ensuremath{\blacksquare}}


\newcommand{\xrightarrowdbl}[2][]{%
  \xrightarrow[#1]{#2}\mathrel{\mkern-14mu}\rightarrow
}



\title{Orbit misbehavior, isotropy discontinuity, and large isotypic components}
\author{Alexandru Chirvasitu}


\begin{document}

\date{}

\newcommand{\Addresses}{{
  \bigskip
  \footnotesize

  \textsc{Department of Mathematics, University at Buffalo}
  \par\nopagebreak
  \textsc{Buffalo, NY 14260-2900, USA}  
  \par\nopagebreak
  \textit{E-mail address}: \texttt{achirvas@buffalo.edu}


}}

\maketitle

\begin{abstract}
  Let $\mathbb{G}$ be a compact Hausdorff group acting on a compact Hausdorff space $X$, $\alpha$ an irreducible $\mathbb{G}$-representation, and $C(X)$ the $C^*$-algebra of complex-valued continuous functions on $X$. We prove that the isotypic component $C(X)_{\alpha}$ is finitely generated as a module over the invariant subalgebra $C(X/\mathbb{G})\subseteq C(X)$ precisely when the map sending $x\in X$ to the dimension of the space of vectors in $\alpha$ invariant under the isotropy group $\mathbb{G}_x$ is locally constant. This (a) specializes back to an observation of De Commer-Yamashita equating the finite generation of all $C(X)_{\alpha}$ with the Vietoris continuity of $x\mapsto \mathbb{G}_x$, and (b) recovers and extends Watatani's examples of infinite-index expectations resulting from non-free finite-group actions.

  We also show that the action of a compact group $\mathbb{G}$ on the maximal equivariant compactification on the disjoint union of its Lie-group quotients has tubes about all orbits precisely when $\mathbb{G}$ is Lie. This is the converse (via a canonical construction) of the well-known fact that actions of compact Lie groups on Tychonoff spaces admit tubes.
\end{abstract}

\noindent {\em Key words: compact group; action; isotypic component; finitely generated; Hilbert module; Hilbert bundle; tube; slice; principal bundle; locally trivial; equivariant compactification; Vietoris topology; expectation; Tychonoff space}

\vspace{.5cm}

\noindent{MSC 2020: 22C05; 22D10; 57S10; 55R10; 13C10; 46M20; 54C60; 43A85; 22F30}


\section*{Introduction}

For every {\it representation} \cite[\S 2]{rob} of a compact (always Hausdorff in the sequel) group $\bG$ on a Banach space $F$ and every irreducible (hence finite-dimensional \cite[Corollary 5.8]{rob}) representation $\alpha\in\Irr(\bG)$ we have the corresponding {\it spectral projection} (the $P_{e_{\alpha}}$ of \cite[Definition 4.21]{hm5})
\begin{equation*}
  F
  \xrightarrowdbl{\quad\pi_{\alpha}\quad}
  F_{\alpha}
  :=
  \left\{v\in F\ |\ \spn\bG v\cong \alpha^{n}\text{ for some }n\in \bZ_{>0}\right\}
  \le F
  ,\quad
  \alpha\in\Irr(\bG)
\end{equation*}
onto the {\it ($\alpha$-)isotypic component}. This all applies in particular to the $\bG$-action $s\triangleright f:=f(s^{-1}\triangleright\bullet)$ on the $C^*$-algebra $F:=C(X)$ of complex-valued continuous functions on $X$ for a compact Hausdorff left $\bG$-space $\bG\times X\xrightarrow{\triangleright}X$. 

This paper was initially motivated by the cursory \cite[Remark 2 preceding Proposition 3.4]{dcy}, to the effect that the $C(X)_{\alpha}$ are {\it all} finitely generated (f.g. for short) over the invariant subalgebra $C(X)_{\cat{triv}} = C(X)^{\bG}\cong C(X/\bG)$ of $C(X)$ precisely when
\begin{equation}\label{eq:x2gx}
  X\ni x
  \xmapsto{\quad}
  \bG_x:=\text{isotropy subgroup }
  \left\{s\in \bG\ |\ s\triangleright x = x\right\}
  \le \bG
\end{equation}
is continuous for the {\it Vietoris topology} \cite[\S III.4.2]{johnst_stone} on the space of closed subsets (or subgroups) of $\bG$. This means that, regarded as a map to $2^{\bG}$ (the power-set of $\bG$), \Cref{eq:x2gx} is both 
\begin{itemize}
\item {\it upper semicontinuous} (sometimes {\it hemicontinuous}: \cite[Definition 17.2]{ab_inf-dim-an}) in the sense that
  \begin{equation*}
    \forall x\in X
    ,\ 
    \forall\text{ neighborhood }U\supseteq \bG_x
    ,\quad
    \{x'\in X\ |\ \bG_{x'}\subseteq U\}\subseteq X
    \text{ is a neighborhood of }x
  \end{equation*}
  (this much is automatic for every action \cite[Proposition 10.6]{hm5});

\item and {\it lower semicontinuous}:
  \begin{equation*}
    \forall x\in X
    ,\
    \forall\text{ open }U\text{ with }\bG_x \cap U\ne \emptyset
    ,\quad
    \{x'\in X\ |\ \bG_{x'}\cap U\ne \emptyset\}\subseteq X
    \text{ is a neighborhood of }x.
  \end{equation*}
\end{itemize}
That motivating observation can be retrieved (\Cref{cor:iffviet}) as a consequence of a ``robust'' version thereof (\Cref{th:tameiffcontrk}), characterizing the $\alpha$ for which finite generation holds in terms of isotropy behavior:

\begin{theoremN}\label{thn:fg}
  Let $\bG$ be a compact group acting on a compact Hausdorff space $X$. The isotypic component $C(X)_{\alpha}$, $\alpha\in\Irr(\bG)$ is finitely generated as a $C(X)^{\bG}$-module if and only if the function
  \begin{equation*}
    X\ni x
    \xmapsto{\quad}
    \dim\alpha^{\bG_x}
    \in \bZ_{\ge 0}
    ,\quad
    \alpha^{\bH}:=\text{$\bH$-invariant vectors in the carrier space of $\alpha$}
  \end{equation*}
  is locally constant.  \qedhere
\end{theoremN}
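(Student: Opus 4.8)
The plan is to exhibit $M:=C(X)_{\alpha}$ as the section module of a Hilbert bundle over $X/\bG$ whose fibre over $[x]$ is the finite-dimensional space $C(\bG x)_{\alpha}$, of dimension $(\dim\alpha)\cdot\dim\alpha^{\bG_{x}}$, and then invoke the standard correspondence between finitely generated Hilbert $C(Y)$-modules and finite-rank vector bundles over $Y$.

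\emph{Step 1 (module structure and fibres).} Equip $M$ with the $C(X)^{\bG}$-valued pairing $\langle f,g\rangle:=\pi_{\cat{triv}}(\overline{f}g)$; multiplication by a $\bG$-invariant function being $\bG$-equivariant preserves isotypic components, so this makes $M$ a Hilbert module over $R:=C(X)^{\bG}\cong C(X/\bG)$, and $\langle f,f\rangle([x])=\int_{\bG}|f(s\triangleright x)|^{2}\,ds$ shows the pairing is faithful. Fix $y=[x]$ and let $N_{y}:=\{f\in M:\ f|_{\bG x}=0\}$. Since $\pi_{\alpha}$ is $\bG$-equivariant (being integration against a matrix coefficient, it commutes with all equivariant maps), restriction $C(X)\twoheadrightarrow C(\bG x)$ sends $M$ onto $C(\bG x)_{\alpha}$ with kernel $N_{y}$, so $M/N_{y}\cong C(\bG x)_{\alpha}$. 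Compactness of $\bG$ makes $p\colon X\to X/\bG$ closed, so given $f\in N_{y}$ and $\varepsilon>0$ there is a saturated open $W\supseteq\bG x$ with $|f|<\varepsilon$ on $W$, and picking $g\in R$ with $g([x])=0$ (so $g$ lies in the maximal ideal $\fm_{y}\subseteq R$) and $g\equiv 1$ off $W$ gives $\|f-gf\|<\varepsilon$ with $gf\in\fm_{y}M$; hence $N_{y}=\overline{\fm_{y}M}$. Via $\bG x\cong\bG/\bG_{x}$ and Frobenius reciprocity, $C(\bG x)_{\alpha}\cong\alpha^{\oplus\dim\alpha^{\bG_{x}}}$, so $\dim M/N_{y}=(\dim\alpha)\cdot\dim\alpha^{\bG_{x}}<\infty$.

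\emph{Step 2 (from fibres to the statement).} The spaces $M/N_{y}=M/\overline{\fm_{y}M}$ form a Hilbert bundle over the compact space $X/\bG$ with section module $M$, so $y\mapsto\dim M/N_{y}$ is automatically lower semicontinuous; I claim $M$ is finitely generated over $R$ iff this rank is locally constant. If $M$ is finitely generated it is finitely generated projective (a finitely generated Hilbert module over a unital $C^{*}$-algebra is a complemented submodule of a free one), so $M\cong\Gamma(E)$ for a Hermitian vector bundle $E$ by Serre--Swan, and $\dim M/N_{y}=\mathrm{rank}_{y}E$ is locally constant. Conversely, suppose the rank is locally constant and pick $f_{1},\dots,f_{N}\in M$ restricting to a basis of $M/N_{y_{0}}$; the Gram matrix $(\langle f_{i},f_{j}\rangle)\in M_{N}(R)$ is invertible on a neighbourhood $V$ of $y_{0}$, on which, by local constancy, the $f_{i}$ restrict to a basis of every fibre. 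Taking a partition of unity $\{\chi_{k}\}\subseteq R$ subordinate to a finite subcover $\{V_{k}\}$ by such neighbourhoods, Cramer's rule solves $\chi_{k}f=\sum_{i}\rho_{i}^{(k)}f_{i}^{(k)}$ over $V_{k}$ with coefficients $\rho_{i}^{(k)}$ that extend by zero to $R$; this equality holds in $M$ because both sides agree in every fibre $M/N_{y}$ and $\bigcap_{y}N_{y}=0$, so $f=\sum_{k}\chi_{k}f$ lies in the $R$-span of the finite set $\{f_{i}^{(k)}\}$. Finally, $\dim M/N_{[x]}$ differs from $x\mapsto\dim\alpha^{\bG_{x}}$ by the nonzero factor $\dim\alpha$, and since $p$ is an open surjection a function on $X/\bG$ is locally constant exactly when its pullback to $X$ is; this turns ``$y\mapsto\dim M/N_{y}$ locally constant'' into ``$x\mapsto\dim\alpha^{\bG_{x}}$ locally constant''.

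\emph{Main obstacle.} The crux is Step 1, and within it the identification $N_{y}=\overline{\fm_{y}M}$: compactness of $\bG$ (closedness of $p$) is exactly what forces the section-module bundle to have the geometric fibres $C(\bG x)_{\alpha}$, and hence the dimensions $(\dim\alpha)\cdot\dim\alpha^{\bG_{x}}$; granting that, the bundle-theoretic input of Step 2 is classical.
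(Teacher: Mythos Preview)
Your proof is correct and follows essentially the same strategy as the paper's: realize $C(X)_{\alpha}$ as a Hilbert $C(X/\bG)$-module whose fibres are the $C(\bG x)_{\alpha}$, then invoke the equivalence between finitely generated (hence projective) Hilbert $C(Y)$-modules and locally trivial finite-rank bundles. The only cosmetic differences are that the paper outsources the bundle identification and the ``constant rank $\Rightarrow$ locally trivial'' step to the literature (Gogi\'c, Dupr\'e, Serre--Swan, Vasconcelos), whereas you unpack the fibre computation $N_{y}=\overline{\fm_{y}M}$ and the partition-of-unity argument explicitly.
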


Having thus accorded some attention to irregularities of the isotropy-group map \Cref{eq:x2gx}, \Cref{se:tubext} switches focus to the adjacent issue of action misbehavior for non-Lie $\bG$. By way of a brief reminder, recall that an {\it open} or {\it closed tube} \cite[\S II.4]{bred_cpct-transf} about an orbit $\bG x$ is a $\bG$-equivariant homeomorphism of 
\begin{equation*}
  \bG\times_{\bG_x}Y
  :=
  \bG\times Y/\left((ts,y) = (t,sy),\quad\forall t\in \bG,\ s\in \bG_x,\ y\in Y\right)    
\end{equation*}
onto an open or, respectively, closed neighborhood of $\bG x$ for a $\bG_x$-space $Y$ (then referred to as a {\it slice} \cite[Definition 4.1]{bred_cpct-transf}). Unqualified tubes are typically understood to be open. 


It is well known (\cite[Theorem II.5.4]{bred_cpct-transf}, \cite[Theorem I.5.7]{td_transf-gp}, etc.) that all orbits of compact-Lie-group actions on {\it Tychonoff (or $T_{3\frac 12}$)} \cite[Definition 14.8]{wil_top} spaces admit tubes, and (the proof of) \Cref{th:nonlie-nonop} inverts that implication canonically:

\begin{theoremN}\label{thn:lie}
  A compact group $\bG$ is Lie if and only if its action on the {\it universal equivariant compactification} \cite[\S 2.8]{dvr_puc_1977} of
  \begin{equation*}
    \coprod_{\bK}\left(\text{Lie-group quotients }\bG/\bK\right)
  \end{equation*}
  admits tubes around all of its orbits.  \qedhere
\end{theoremN}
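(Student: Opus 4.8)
The plan is to prove the two implications separately, the forward one being routine and the converse carrying all the content. If $\bG$ is a compact Lie group, then by the slice theorem recalled just above (\cite[Theorem II.5.4]{bred_cpct-transf}, \cite[Theorem I.5.7]{td_transf-gp}) its action on every Tychonoff space has tubes around all orbits; writing $X:=\coprod_{\bK}\bG/\bK$, the space $X$ is a disjoint union of compact Hausdorff spaces, hence Tychonoff, with continuous $\bG$-action, so its universal equivariant compactification $b_{\bG}X$ (\cite[\S 2.8]{dvr-puc}) exists, is compact Hausdorff, and the slice theorem applies to it. All the work is thus in the converse, which I would prove contrapositively: \emph{if $\bG$ is not Lie, some orbit of $\bG$ on $b_{\bG}X$ admits no tube.}

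So assume $\bG$ is not Lie, and begin with some bookkeeping. No $\bK$ in the indexing family is trivial, since otherwise $\bG=\bG/\bK$ would be Lie; and the family, directed by reverse inclusion with $\bG=\varprojlim_{\bK}\bG/\bK$, has no smallest member, because a smallest one would equal $\bigcap_{\bK}\bK=\{e\}$ (that intersection is trivial since for each $g\neq e$ some finite-dimensional representation, hence some Lie quotient, separates $g$ from $e$). I would also record that the $\bK$'s form a neighbourhood basis at $e\in\bG$: for open $U\ni e$ the compact sets $\bK\cap(\bG\setminus U)$ have empty total intersection, so finitely many already do, and their intersection — normal with Lie quotient — is a member of the family contained in $U$. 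Consequently each orbit $\bG/\bK\subseteq X$ has the (now nontrivial) isotropy $\bK$, self-conjugate as $\bK$ is normal, and each $\bG/\bK$, being compact, is clopen in $b_{\bG}X$; thus $X$ is open and dense but not closed in $b_{\bG}X$, so the boundary $b_{\bG}X\setminus X$ is nonempty.

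The heart of the argument is to locate a \emph{free} orbit in that boundary. I would take the net $(e\bK)_{\bK}$ of base cosets, indexed by the directed family, pass to a convergent subnet by compactness of $b_{\bG}X$, and let $x$ be its limit; the subnet runs over a cofinal set of $\bK$'s, hence — by the neighbourhood-basis property — over $\bK$'s Vietoris-converging to $\{e\}$, and since the $\bG/\bK$ are clopen in $b_{\bG}X$ while the family has no top, $x\notin X$. To check $\bG_{x}=\{e\}$, fix $s\neq e$ and use the model of $C(b_{\bG}X)$ as the bounded continuous functions on $X$ with norm-continuous $\bG$-orbit map (\cite[\S 2.8]{dvr-puc}): choose $g\in C(\bG)$ with $g(s)\neq g(e)$ and set $f:=(f_{\bK})_{\bK}$, $f_{\bK}(\hat y\bK):=\int_{\bK}g(\hat y k)\,dk$ (normalized Haar measure on $\bK$), which is well defined and continuous on $\bG/\bK$. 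Uniform continuity of $g$ on the compact group $\bG$ makes $(f_{\bK})_{\bK}$ uniformly bounded with $\sup_{\bK}\|s'\triangleright f_{\bK}-f_{\bK}\|\to 0$ as $s'\to e$, i.e.\ $f\in C(b_{\bG}X)$; and since $\int_{\bK}g(\hat y k)\,dk\to g(\hat y)$ uniformly in $\hat y$ as $\bK\to\{e\}$, evaluating the continuous extension of $f$ along the subnet gives $f(x)=g(e)$ and $f(sx)=g(s)$, so $sx\neq x$. As $s\neq e$ was arbitrary, $\bG_{x}=\{e\}$.

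Finally the contradiction. A tube about the free orbit $\bG x$ would be a $\bG$-equivariant homeomorphism $\bG\times Y=\bG\times_{\bG_{x}}Y\to U$ onto an open $\bG$-invariant neighbourhood $U$ of $\bG x$ in $b_{\bG}X$ (the slice $Y$ being just a topological space, since $\bG_{x}$ is trivial), and every orbit of $\bG$ on $\bG\times Y$ is free, so every point of $U$ has trivial isotropy. But $U$ is open and $X$ is dense, so $U\cap X$ is a nonempty open $\bG$-invariant subset of $X=\coprod_{\bK}\bG/\bK$ and therefore contains a whole summand $\bG/\bK_{0}\subseteq U$, whose points have nontrivial isotropy $\bK_{0}$ — contradicting the freeness of all orbits in $U$. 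Hence $\bG x$ admits no tube, and the tube hypothesis forces $\bG$ to be Lie. The main obstacle is the heart of the argument: producing the free boundary orbit rigorously needs both the neighbourhood-basis fact for the $\bK$'s and the equicontinuous-function model of $C(b_{\bG}X)$, the latter precisely to guarantee that the functions manufactured by averaging survive to the compactification and so genuinely witness $\bG_{x}=\{e\}$.
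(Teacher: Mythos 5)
Your proof is correct and follows essentially the same route as the paper's: both locate a trivial-isotropy cluster point of the net of identity cosets in the corona of the compactification, certify its freeness with a $\bG$-uniformly continuous function in de Vries's model of the compactification, and then contradict the density of the non-free part $\coprod_{\bK}\bG/\bK$ inside any putative tube. The only (harmless) divergence is cosmetic: you build the separating function by averaging a fixed $g\in C(\bG)$ over each $\bK$, whereas the paper pulls back a function from one fixed Lie quotient $\bG/\bK_{U_0}$ and extends by zero, and your final contradiction is phrased directly via the tube rather than through the intermediate condition that $X_{(1)}$ be open.
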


An earlier, more involved and less direct attempt at proving \Cref{thn:fg} resorted to an analysis of an arbitrary compact-group action based on restricting attention to tubes. It quickly became apparent that the Lie/non-Lie dichotomy would have to be central to any such attempt: tubes are significantly better behaved when $\bG$ is Lie, which in turn forces any such route to \Cref{thn:fg} into a reduction of the general problem to Lie-group approximations of arbitrary compact $\bG$. \Cref{se:tubext} (summarized here in the form of \Cref{thn:lie}) is an offshoot of those earlier considerations.

\subsection*{Acknowledgements}

This work is partially supported by NSF grant DMS-2001128, and is part of the project Graph Algebras partially supported by EU grant HORIZON-MSCA-SE-2021 Project 101086394.

Early drafts have benefited from helpful pointers and remarks from K. De Commer, P. M. Hajac, G. Luk\'acs, B. Passer, P. So{\l}tan and M. Yamashita. I am also grateful for the anonymous referee's instructive comments, questions and suggestions. 

Although the aforementioned \cite[Remark 2, p.725]{dcy} does not cite a source for the claim recovered below as \Cref{cor:iffviet}, in the late stages of preparing a draft of the present paper it was brought to my attention that there is work \cite{bdc_pre} (albeit not in circulation at the time of this writing) proving that result and presumably overlapping the material below in a few ways. I am very grateful to K. De Commer for informing me of this. 


\section{Orbits, isotropy jumps, and (failure of) finite generation}\label{se:orb-jump}

Topological (mostly compact) groups are assumed Hausdorff, and $\bG$-spaces $X$ are at least {\it Tychonoff (or $T_{3\frac 12}$)} \cite[Definition 14.8]{wil_top}, and usually themselves compact Hausdorff. A few basic observations will be implicit in much that follows:
\begin{itemize}
\item For a compact Hausdorff $\bG$-space with compact $\bG$ the restrictions 
  \begin{equation}\label{eq:tg}
    C(X)_{\alpha}
    \xrightarrowdbl{\quad\text{restriction}\quad}
    C(Z)_{\alpha}
    ,\quad
    \text{closed $\bG$-invariant }Z\subseteq X
  \end{equation}
  are onto by the {\it Tietze-Gleason} \cite[Theorem I.2.3]{bred_cpct-transf}.

\item Whence the non-zero $C(X)_{\alpha}$ are precisely those with $\alpha\le C(\bG/\bG_x)$ for some isotropy group $\bG_x$. Equivalently \cite[pp.82-83]{rob}, $\alpha$ must have non-zero vectors invariant under some $\bG_x$.   
\end{itemize}

The following example is a paraphrase (and extension) of sorts of \cite[p.711, 2$^{nd}$ paragraph]{dcy}, where $\bG:=\bS^1$. See also \cite[Proposition 2.8.2]{wat_indexcast} for a similar discussion concerning non-free finite-group actions with ``most'' isotropy groups trivial. 

\begin{example}\label{ex:goncone}
  Extend the (left, say) translation self-action of a compact group $\bG$ in the obvious fashion to the {\it cone} \cite[Chapter 0, pp.8-9]{hatch_at}
  \begin{equation*}
    X:=\cC \bG:=\left(\bG\times I\right)/\left(\bG\times\{0\}\right)
    ,\quad
    I:=[0,1]. 
  \end{equation*}
  I claim that the only isotypic component $C(X)_{\alpha}$ that is finitely generated as a module over $C(X)_{\cat{triv}}$ is $C(X)_{\cat{triv}}$ itself. 

  Note first that
  \begin{equation*}
    C(X)_{\cat{triv}} = C(X)^{\bG} \cong C(X/\bG) \cong C(I), 
  \end{equation*}
  consisting of continuous functions defined arbitrarily on any single interval
  \begin{equation*}
    I\cong \{s\}\times I\subset X
    ,\quad
    \text{fixed }s\in \bG
  \end{equation*}
  and then extended uniquely to $X$ by equivariance. Now fix $\cat{triv}\ne \alpha\in \Irr(\bG)$. Now observe:
  \begin{itemize}
  \item On the one hand all elements of $C(X)_{\alpha}$ must vanish at the tip $p_0$ of the cone $X=\cC\bG$: regarding an $f\in C(X)_{\alpha}$ as a function on $\bG\times I$, its restriction to the slice $\bG\times \{0\}$ collapsed in the cone is both constant and an element of $C(\bG)_{\alpha}$ (so zero).
    
  \item On the other hand, every $0\le f\in C(X)^{\bG}$ vanishing at $p_0$ can be recovered as
    \begin{equation*}
      f=\pi(g^* g),\ g\in C(X)_{\alpha}
      \quad
      \text{for the usual expectation }
      C(X)\xrightarrow{\quad \pi:=\pi_{\cat{triv}}\quad}C(X)^{\bG}.
    \end{equation*}
  \end{itemize}
  It follows that the ideal
  \begin{equation}\label{eq:spanideal}
    \spn\left\{\pi(g^* g')\ |\ g,g'\in C(X)_{\alpha}\right\}\trianglelefteq C(X)^{\bG}\cong C(I)
  \end{equation}
  is precisely the ideal
  \begin{equation}\label{eq:m0ideal}
    M_{0}:=\left\{f\in C(I)\ |\ f(0)=0\right\}\trianglelefteq C(I)
    \text{ of \cite[Theorem 4.6]{gj_rings}}. 
  \end{equation}
  If $C(X)_{\alpha}$ were finitely generated over $C(I)$ so too would \Cref{eq:m0ideal}; on the other hand, \cite[Problem 14C]{gj_rings} shows that (the image of) $M_0$ is not finitely generated even after passing to the quotient ring $C(I)/O_0$ by the ideal
  \begin{equation*}
    O_0:=\left\{f\in C(I)\text{ vanishing in a neighborhood of }0\right\}\trianglelefteq C(I)
  \end{equation*}
  of \cite[Problem 4I]{gj_rings}. 
\end{example}

\begin{remarks}\label{res:topfg}
  \begin{enumerate}[(1),wide=0pt]
  \item\label{item:res:topfg:is.tfg} It is also clear from \Cref{ex:goncone} that isotypic components can easily be {\it topologically} finitely generated over $C(X)^{\bG}$ without being algebraically so, in the sense of containing a dense finitely-generated module. Indeed, all of them are (topologically f.g.): they are easily seen to be of the form
    \begin{equation*}
      C(\cC\bG)_{\alpha}
      \cong
      C_0((0,1])\otimes C(\bG)_{\alpha}
      \quad
      \left(\text{ordinary algebraic tensor product}\right)
    \end{equation*}
    (with $C_0$ denoting, as usual \cite[General notation]{wo}, continuous functions on a locally compact space vanishing at infinity), and hence \cite[pre Theorem 1.1]{gog_top-fg} each is respectively topologically generated by
    \begin{equation*}
      \dim_{\bC}C(\bG)_{\alpha}
      \xlongequal[]{\quad \text{\cite[pre Corollary 5.7]{rob}}\quad}
      \left(\dim\alpha\right)^2
    \end{equation*}
    elements.

  \item On the other hand, it is also perfectly possible for isotypic components $C(X)_{\alpha}$ not to be even topologically f.g. over $C(X/\bG)$: \Cref{ex:omegaomega}.    
  \end{enumerate}  
\end{remarks}

\begin{example}\label{ex:omegaomega}
  Construct the action $\bG\times X\to X$, for instance, as follows:
    \begin{itemize}[wide=0pt]
    \item Set $X_0:=\bG\times Y$ with the left-hand-factor translation $\bG$-action for any locally compact (Hausdorff) that is not {\it $\sigma$-compact} (i.e. \cite[\S I.3, p.19]{ss_countertop} a union of countably many compact subspaces).

      Examples include any uncountable discrete set or 
      \begin{equation*}
        \begin{aligned}
          Y:=[0,\Omega)
          &= \left\{\text{set of countable ordinals}\right\}\\
          &= \left\{\text{set of ordinals}<\Omega\right\}\\
          &= \Omega
            :=\text{first uncountable ordinal}
        \end{aligned}        
      \end{equation*}
      endowed with the {\it order topology} of \cite[\S II.39]{ss_countertop} (the space also denoted by $[0,\Omega)$ in \cite[\S II.42]{ss_countertop}).      

    \item Extend that action in the only way possible to the {\it one-point compactification} \cite[\S II.35]{ss_countertop} $X:=X_0\sqcup\{*\}$ of $X_0$.      
    \end{itemize}
    The quotient space $X/\bG$ is the one-point compactification $\overline{Y}=Y\sqcup\{*\}$ of $Y$, and {\it no} $C(X)_{\alpha}$ is topologically finitely generated for non-trivial $\alpha$. One can argue as in \Cref{ex:goncone}: note that every $\pi(g^* g')\in C(X/\bG)\cong C(\overline{Y})$ with $g,g'\in C(X)_{\alpha\ne \cat{triv}}$ vanishes along some non-singleton closed subset $F\supset *\in \overline{Y}$ because the assumed non-$\sigma$-compactness ensures that $\{*\}$ is not a {\it $G_{\delta}$ set} (i.e. \cite[\S I.1, p.4]{ss_countertop} a countable intersection of open sets). Indeed: $\{*\}$'s being $G_{\delta}$ is by the very definition of the one-point-compactification topology equivalent to its complement $Y=\overline{Y}\setminus \{*\}$ a countable union of closed (hence compact) subsets of $Y$. The singleton $\{*\}$ being non-$G_\delta$, finitely many such $\pi(g^* g')$ cannot generate the entire ideal defined on the left-hand side of \Cref{eq:spanideal}. 
\end{example}

The phenomenon underlying \Cref{ex:goncone} (and \cite[Proposition 2.8.2 and Example 2.8.4]{wat_indexcast}) is of ampler scope, and the pertinent proof techniques allow the generalization of \Cref{cor:watgen}, in turn following from \Cref{th:tameiffcontrk}. The latter is very much in line with \cite[Remark post Theorem 3.3]{dcy}, noting the ill behavior resulting, for an action of a compact group $\bG$ on a compact space $X$, from the Vietoris-discontinuity of \Cref{eq:x2gx}. 

It will be convenient to have short-hand vocabulary for referring to finite generation of isotypic components.

\begin{definition}\label{def:tame}
  Let $\bG$ be a compact group acting on a compact Hausdorff space $X$. A representation $\alpha\in\Irr(\bG)$ is {\it tame (for the given action)} if $C(X)_{\alpha}$ is finitely $C(X)^{\bG}$-generated.

  Variations of the phrase, such as tameness {\it over $X$}, also appear and should be self-explanatory when they do. We also refer to $X$ or the action as {\it $\alpha$-taming}. 
\end{definition}

For $\bG$-representations $\alpha$ and closed subgroups $\bH\le \bG$ we write 
\begin{equation*}
  \alpha^{\bH}:=\{\text{$\bH$-invariant vectors in $\alpha$}\}.
\end{equation*}

\begin{theorem}\label{th:tameiffcontrk}
  A representation $\alpha\in \Irr(\bG)$ is tame for an action of a compact group $\bG$ on compact Hausdorff $X$ if and only if
  \begin{equation}\label{eq:agxcont}
    X\ni x
    \xmapsto{\quad}
    \dim\alpha^{\bG_x}
    \in \bZ_{\ge 0}
  \end{equation}
  is continuous (equivalently, locally constant). 
\end{theorem}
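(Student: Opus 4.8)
The plan is to study $C(X)_{\alpha}$ as a module over $A:=C(X)^{\bG}\cong C(X/\bG)$ through its fibers over $Y:=X/\bG$; the step I expect to carry the most weight is the identification of those fibers. Let $q\colon X\to Y$ be the (open) orbit map and, for $y=q(x)$, let $I_y\trianglelefteq A$ be the maximal ideal at $y$ — equivalently, the ideal of invariant continuous functions vanishing on $\bG x$. The key lemma is
\[
  \ker\bigl(C(X)_{\alpha}\xrightarrow{\ \mathrm{restriction}\ }C(\bG x)_{\alpha}\bigr)=I_y\cdot C(X)_{\alpha},
\]
so that, by Tietze--Gleason surjectivity \Cref{eq:tg}, $C(X)_{\alpha}\otimes_{A}(A/I_y)\cong C(\bG x)_{\alpha}$, a space of dimension $d(y):=(\dim\alpha)(\dim\alpha^{\bG_x})$ by Frobenius reciprocity (independent of the choice of $x$ over $y$, since $\bG_{sx}=s\bG_xs^{-1}$). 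The inclusion $\supseteq$ is clear; for $\subseteq$ I would take $f\in C(X)_{\alpha}$ vanishing on $\bG x$, put $V:=\spn\bG f$ with a $\bG$-invariant inner product and orthonormal basis $(f_i)_{i=1}^{n}$, and set $b:=\sum_i|f_i|^2$. Then $b\in A$ — its invariance follows from unitarity of $\bG$ on $V$, as $b(s^{-1}x')=\|\mathrm{ev}_{s^{-1}x'}\|_{V^{*}}^{2}=\|\mathrm{ev}_{x'}\circ(s\triangleright-)\|_{V^{*}}^{2}=\|\mathrm{ev}_{x'}\|_{V^{*}}^{2}=b(x')$ — and $b$ vanishes on $\bG x$, while Cauchy--Schwarz gives $|v|^{2}\le C_v\,b$ for every $v\in V$. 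Consequently $v\mapsto v\,b^{-1/4}$, extended by $0$ on $\{b=0\}$, is a well-defined $\bG$-equivariant linear map $V\to C(X)$ (valued in $C(X)$ by the same bound, equivariant because $b$ is invariant), so its image — which contains $h:=f\,b^{-1/4}$ — is $\alpha$-isotypic (or zero, forcing $f=0$); hence $h\in C(X)_{\alpha}$ and $f=b^{1/4}h\in I_y\cdot C(X)_{\alpha}$.

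Next I would record that $y\mapsto d(y)$, equivalently $x\mapsto\dim\alpha^{\bG_x}$, is \emph{lower} semicontinuous with no finiteness hypothesis. Fix $x_0$, take $\alpha$ unitary, and let $P_0:=\int_{\bG_{x_0}}\alpha(g)\,dg$ project onto $\alpha^{\bG_{x_0}}$. The open set $W:=\{h\in\bG:\|\alpha(h)P_0-P_0\|<\tfrac12\}$ contains $\bG_{x_0}$, so by the automatic upper semicontinuity of \Cref{eq:x2gx} the set $\{x:\bG_x\subseteq W\}$ is a neighbourhood of $x_0$; for such $x$ the projection $P_x:=\int_{\bG_x}\alpha(h)\,dh$ onto $\alpha^{\bG_x}$ obeys $\|P_xv-v\|<\tfrac12\|v\|$ for $v\in\alpha^{\bG_{x_0}}$, so $P_x$ is injective on $\alpha^{\bG_{x_0}}$ and $\dim\alpha^{\bG_x}\ge\dim\alpha^{\bG_{x_0}}$. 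As $\{x:\dim\alpha^{\bG_x}>c\}$ is open and $\bG$-invariant, its image in $Y$ is open, and $d$ is lower semicontinuous on $Y$.

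For the forward implication, suppose $C(X)_{\alpha}$ is finitely generated over $A$. Then $y\mapsto\dim_{\bC}C(X)_{\alpha}\otimes_A(A/I_y)$ is upper semicontinuous — the standard argument being that a finite generating set whose images span the fiber at $y_0$ continues to generate over a neighbourhood of $y_0$ (Nakayama over the localization $A_{I_{y_0}}$, then spreading out), so the fiber dimension cannot jump up at $y_0$. By the key lemma this function is $d$, which is therefore both upper and (by the previous paragraph) lower semicontinuous, hence locally constant; so is $x\mapsto\dim\alpha^{\bG_x}$.

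For the converse, assume $x\mapsto\dim\alpha^{\bG_x}$, hence $d$, is locally constant. Around $y_0=q(x_0)$ choose $f_1,\dots,f_r\in C(X)_{\alpha}$ ($r=d(y_0)$) restricting to a basis of $C(\bG x_0)_{\alpha}$; the Gram matrix $G(y):=\bigl(\pi_{\cat{triv}}(\overline{f_i}f_j)(y)\bigr)_{i,j}$ is continuous in $y$ and at $y=q(x)$ equals the $L^2(\bG x)$-Gram matrix of the $f_i|_{\bG x}$, hence is positive definite at $y_0$ and invertible on a neighbourhood $U\ni y_0$ on which $d\equiv r$. There the $f_i|_{\bG x}$ are a basis of $C(\bG x)_{\alpha}$, so $G(y)^{-1}$ solves $f|_{\bG x}=\sum_i c_i(y)f_i|_{\bG x}$ with $c_i\in C(U)$, i.e.\ $f-\sum_i(c_i\circ q)f_i$ vanishes on $q^{-1}(U)$. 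Covering $Y$ by finitely many such $U_l$ with frames $f^{(l)}_i$ and choosing a partition of unity $\rho_l$ on $Y$ with $\operatorname{supp}\rho_l\subseteq U_l$, one gets $f=\sum_l(\rho_l\circ q)f=\sum_{l,i}\bigl((\rho_l c^{(l)}_i)\circ q\bigr)f^{(l)}_i$, so the finitely many $f^{(l)}_i$ generate $C(X)_{\alpha}$ over $A$ (indeed $C(X)_{\alpha}$ is then finitely generated projective). Equivalently, under the Hilbert $C(Y)$-module structure $\langle f,g\rangle:=\pi_{\cat{triv}}(\overline fg)$ the key lemma exhibits the fibers of the associated continuous field as the $C(\bG x)_{\alpha}$, and finite generation amounts to that field being a finite-rank vector bundle, i.e.\ to $d$ being locally constant.
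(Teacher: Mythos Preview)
Your proof is correct and takes a genuinely different, more elementary route than the paper. The paper works entirely through black-box structure theorems: for $(\Rightarrow)$ it equips $C(X)_{\alpha}$ with its Hilbert $C(X/\bG)$-module structure, invokes the fact that finitely generated Hilbert modules are automatically projective, and then appeals to the local constancy of the rank function for f.g.\ projectives; for $(\Leftarrow)$ it cites Gogi\'c to realize $C(X)_{\alpha}$ as the sections of a Hilbert bundle with fibers $C(\bG x)_{\alpha}$, invokes Dupr\'e for local triviality in the constant-dimension case, and finishes with Serre--Swan. In particular the paper never identifies the \emph{algebraic} fiber $C(X)_{\alpha}\otimes_{A}A/I_{y}$ directly.

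Your approach trades those citations for a self-contained argument. Your key lemma---that $\ker\bigl(C(X)_{\alpha}\to C(\bG x)_{\alpha}\bigr)=I_{y}\,C(X)_{\alpha}$ via the $b^{1/4}$ square-root trick on $V=\spn\bG f$---is doing the real work that the paper outsources to the bundle picture, and it is the step the paper does not contain. With that in hand, $(\Rightarrow)$ becomes pure commutative algebra (upper semicontinuity of fiber dimension for f.g.\ modules via the determinant trick, combined with your direct lower-semicontinuity argument from the upper semicontinuity of $x\mapsto\bG_{x}$), and $(\Leftarrow)$ is your hands-on Gram-matrix local-frame construction glued by a partition of unity. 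What this buys you is independence from the Hilbert-module/Hilbert-bundle machinery; what the paper's route buys is brevity and the immediate conclusion that $C(X)_{\alpha}$ is in fact projective, which you get only as a byproduct at the end.
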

\begin{proof}
  The continuity of \Cref{eq:agxcont} is equivalent to that of
  \begin{equation}\label{eq:cgxcont}
    X/\bG\ni \bG x
    \xmapsto{\quad}
    \dim C(\bG x)_{\alpha}
    = \dim C(\bG/\bG_x)_{\alpha}
    \xlongequal[\quad]{\text{\cite[pp.82-83]{rob}}}
    \dim\alpha\cdot \dim\alpha^{\bG_x},
  \end{equation}
  and we use switch between the two freely.
  
  \begin{enumerate}[label={},wide=0pt]
  \item {\bf ($\xRightarrow{\quad}$)} $C(X)_{\alpha}$ is a {\it Hilbert module} \cite[\S 1, p.4]{lnc_hilb} over $C(X/\bG)$ with the inner product
    \begin{equation*}
      \braket{f\mid g}:=\pi_{\cat{triv}}(f^*g)\in C(X/\bG)
      ,\quad
      f,g\in C(X).
    \end{equation*}
    Its assumed finite generation then entails its projectivity \cite[Corollary 15.4.8]{wo}, whence the conclusion: for any commutative ring $A$ and projective finitely-generated $A$-module $M$ the {\it rank} function \cite[\S 1]{vasc_proj-fg}
    \begin{equation*}
      \left\{\text{prime ideals of $A$}\right\} =: \Spec A
      \ni \fp
      \xmapsto{\quad}
      \dim_{A_{\fp}/\fp A_{\fp}}M_{\fp}
      \in \bZ_{>0}
    \end{equation*}
    (with $\bullet_{\fp}$ denoting {\it localization} \cite[\S 3]{am_comm}) is locally constant \cite[Proposition 1.4]{vasc_proj-fg} even in the (coarser, in our case of $A:=C(X/\bG)$ and $M:=C(X)_{\alpha}$) {\it Zariski} topology \cite[p.12]{am_comm}. 
    
  \item {\bf ($\xLeftarrow{\quad}$)} Conversely, the discussion following \cite[Theorem 2.2]{gog_top-fg} explains how $C(X)_{\alpha}$ can be recovered as the space of continuous {\it sections} ({\it cross}-sections in \cite[Definitions II.13.1]{fd_bdl-1}) of a {\it Hilbert bundle} \cite[Definition II.13.4]{fd_bdl-1} $F=F_{\alpha}$ over $X/\bG$. The fiber $F_{\bG x}$ over $\bG x\in X/\bG$ is easily seen to be precisely $C(\bG x)_{\alpha}$:
    \begin{itemize}[wide]
    \item the functions on $X$ annihilated, in the $C(X/\bG)$-module action on $C(X)$, by the annihilator of $C(\bG x)$ are tautologically precisely the elements of $C(\bG x)$;
    \item to which remark one can simply apply the functor $(-)_{\alpha}$.
    \end{itemize}
    The local constancy of \Cref{eq:cgxcont} thus entails \cite[Proposition 2.3]{dupre_hilbund-1} its {\it local triviality}. Yet another application of \cite[Theorem 1.6.3]{ros_algk} then finishes the proof (that the space $C(X)_{\alpha}$ of sections of $F$ is projective finitely generated over $C(X/\bG)$).
  \end{enumerate}  
\end{proof}

In particular, this recovers the aforementioned \cite[Remark 2, p.725]{dcy}:

\begin{corollary}\label{cor:iffviet}
  Given an action of a compact group $\bG$ on a compact Hausdorff space $X$, the representations $\alpha\in\Irr(\bG)$ are all tame precisely when the map $x\mapsto \bG_x$ of \Cref{eq:x2gx} is continuous for the Vietoris topology on closed subgroups of $\bG$.
\end{corollary}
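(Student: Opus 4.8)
The plan is to derive \Cref{cor:iffviet} from \Cref{th:tameiffcontrk}, which turns the statement into a comparison between Vietoris continuity of $x\mapsto\bG_x$ and local constancy of each of the functions $x\mapsto\dim\alpha^{\bG_x}$, $\alpha\in\Irr(\bG)$. The bridge is a lemma to the effect that $\bH\mapsto\dim\alpha^{\bH}$ is continuous on the space $\mathrm{Sub}(\bG)$ of closed subgroups of $\bG$ carrying the Vietoris topology. Granting it, the forward implication is immediate: if $x\mapsto\bG_x$ is Vietoris continuous then each composite $x\mapsto\bG_x\mapsto\dim\alpha^{\bG_x}$ is a continuous map to the discrete set $\bZ_{\ge0}$, hence locally constant, so every $\alpha$ is tame by \Cref{th:tameiffcontrk}.

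To prove the lemma I would use the identity $\dim\alpha^{\bH}=\int_{\bH}\chi_{\alpha}\,d\mu_{\bH}$, with $\chi_{\alpha}$ the (continuous) character of $\alpha$ and $\mu_{\bH}$ the normalized Haar measure of $\bH$ viewed as a Borel probability measure on $\bG$; this reduces matters to weak-$*$ continuity of $\bH\mapsto\mu_{\bH}$. Since $\mathrm{Sub}(\bG)$ is Vietoris-compact (a closed subspace of the hyperspace of closed subsets of the compact Hausdorff group $\bG$) and the probability measures on $\bG$ are weak-$*$ compact, it suffices to pin down subnet limits: if $\bH_i\to\bH$ with $\mu_{\bH_i}\to\nu$ along a subnet, the upper-semicontinuity half of Vietoris convergence ($\bH_i$ is eventually inside every open neighborhood of $\bH$) forces $\mathrm{supp}\,\nu\subseteq\bH$, while the lower-semicontinuity half (every $h\in\bH$ is a limit of elements $h_i\in\bH_i$, each of which fixes $\mu_{\bH_i}$ under translation) forces $h\cdot\nu=\nu$ for all $h\in\bH$; uniqueness of Haar measure then gives $\nu=\mu_{\bH}$, so the full net converges. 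As $\dim\alpha^{\bH}\in\bZ_{\ge0}$, continuity becomes local constancy.

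For the converse, assume every $\alpha$ is tame, so that by \Cref{th:tameiffcontrk} every $x\mapsto\dim\alpha^{\bG_x}$ is locally constant, and deduce Vietoris continuity of $x\mapsto\bG_x$. Upper semicontinuity is automatic for any action (\cite[Proposition 10.6]{hm5}), so only lower semicontinuity is at issue. If it failed at $x_0$, there would be an open $U\subseteq\bG$ meeting $\bG_{x_0}$ and a net $x_i\to x_0$ with $\bG_{x_i}\subseteq\bG\setminus U$; passing to a subnet, $\bG_{x_i}\to L$ for some $L\in\mathrm{Sub}(\bG)$, and then $L\subseteq\bG\setminus U$ while the automatic upper semicontinuity --- equivalently continuity of the action --- gives $L\subseteq\bG_{x_0}$, so $L\subsetneq\bG_{x_0}$. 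A short Peter--Weyl argument then separates $L$ from $\bG_{x_0}$ by a representation: the quotient $\bG/L\to\bG/\bG_{x_0}$ is a non-injective surjection of compact Hausdorff spaces, so the $\bG$-equivariant isometric inclusion $C(\bG/\bG_{x_0})\hookrightarrow C(\bG/L)$ is proper; as it carries isotypic components into isotypic components and the algebraic sums of the latter are dense, it must be proper on some isotypic component, producing $\alpha\in\Irr(\bG)$ with $\dim\alpha^{L}<\dim\alpha^{\bG_{x_0}}$ (using $\dim C(\bG/\bH)_{\alpha}=\dim\alpha\cdot\dim\alpha^{\bH}$). By the lemma $\dim\alpha^{\bG_{x_i}}\to\dim\alpha^{L}$, so by integrality $\dim\alpha^{\bG_{x_i}}=\dim\alpha^{L}<\dim\alpha^{\bG_{x_0}}$ eventually, contradicting local constancy of $x\mapsto\dim\alpha^{\bG_x}$ at $x_0$.

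I expect the main obstacle to be the continuity lemma for $\bH\mapsto\dim\alpha^{\bH}$ --- that is, the weak-$*$ continuity of $\bH\mapsto\mu_{\bH}$ on $\mathrm{Sub}(\bG)$; everything else is bookkeeping with \Cref{th:tameiffcontrk} and standard facts about the Vietoris topology (including the routine verifications that $\mathrm{Sub}(\bG)$ is Vietoris-closed in the hyperspace of $\bG$ and that nets of isotropy groups have subnets converging in it) and Peter--Weyl theory.
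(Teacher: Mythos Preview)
Your approach is essentially the paper's: reduce both directions to \Cref{th:tameiffcontrk} via the continuity of $\bH\mapsto\dim\alpha^{\bH}$ on $\mathrm{Sub}(\bG)$, and for the converse produce a separating $\alpha$ once a proper Vietoris limit $L\subsetneq\bG_{x_0}$ is in hand. The paper asserts the continuity lemma via the Grassmannian map $\bH\mapsto\alpha^{\bH}$, whereas your Haar-measure argument $\dim\alpha^{\bH}=\int_{\bH}\chi_\alpha\,d\mu_{\bH}$ with weak-$*$ continuity of $\bH\mapsto\mu_{\bH}$ is a perfectly good (and more explicit) alternative; for the separating representation the paper uses Frobenius reciprocity and a result of Reid, while your Peter--Weyl density argument is equally valid.

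One slip to fix: your inequality is reversed. Since $L\subsetneq\bG_{x_0}$, invariance under $\bG_{x_0}$ implies invariance under $L$, so $\alpha^{\bG_{x_0}}\subseteq\alpha^{L}$ and the proper inclusion $C(\bG/\bG_{x_0})\hookrightarrow C(\bG/L)$ yields $\dim\alpha^{\bG_{x_0}}<\dim\alpha^{L}$ for some $\alpha$, not the other way around. The contradiction with local constancy then reads $\dim\alpha^{\bG_{x_i}}\to\dim\alpha^{L}>\dim\alpha^{\bG_{x_0}}$, which is exactly what the paper obtains.
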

\begin{proof}
  The backward implication ($\Leftarrow$) follows from \Cref{th:tameiffcontrk} and the continuity of
  \begin{equation*}
    \begin{tikzpicture}[auto,>=stealth,baseline=(current  bounding  box.center)]
      \path[anchor=base] 
      (0,0) node (l) {$\bH$}
      +(2,.5) node (u) {$\alpha^{\bH}$}
      +(4,0) node (r) {$\dim\alpha^{\bH}$,}
      ;
      \draw[|->] (l) to[bend left=6] node[pos=.5,auto] {$\scriptstyle $} (u);
      \draw[|->] (u) to[bend left=6] node[pos=.5,auto] {$\scriptstyle $} (r);
      \draw[|->] (l) to[bend right=6] node[pos=.5,auto,swap] {$\scriptstyle $} (r);
    \end{tikzpicture}
  \end{equation*}
  with $\bH$ ranging over the Vietoris-topologized space of closed subgroups of $\bG$ and $\alpha^{\bH}$ regarded as a subspace of the carrier space of $V$, and hence an element of the Grassmannian $\mathrm{Gr}(V)$.
  
  For the converse ($\Rightarrow$), suppose \Cref{eq:x2gx} is {\it not} continuous. Because {\it upper} semicontinuity is automatic \cite[Proposition 10.6]{hm5}, discontinuity at $x$ (and the compactness \cite[Theorem 1 and remark (IV) following it]{fell_haus-top} of the Vietoris topology) then means there is some convergent {\it net} \cite[Definition 11.2]{wil_top}
  \begin{equation*}
    x_{\lambda}
    \xrightarrow[\quad\lambda\quad]{}
    x
    \quad\text{with}\quad
    \bG_{x_{\lambda}}
    \xrightarrow[\lambda]{\quad\text{Vietoris}\quad}
    \bL\lneq \bG_x.
  \end{equation*}
  There is \cite[p.306]{reid} a non-trivial irreducible representation $\beta\in \Irr(\bG_x)$ with non-zero $\beta^{\bL}$, and also
  \begin{equation*}
    \alpha\in\Irr(\bG)
    \quad\text{with}\quad
    \beta\le \text{restriction }\big\downarrow^{\bG}_{\bG_x}\alpha
  \end{equation*}
  (by {\it Frobenius reciprocity} \cite[Theorem 8.9]{rob}: {\it induce} \cite[\S 8, p.84]{rob} $\beta$ up to $\bG$ and pick an appropriate irreducible summand therein). But then
  \begin{equation*}
    \dim\alpha^{\bG_{x_{\lambda}}}
    \xrightarrow[\quad\lambda\quad]{}
    \dim\alpha^\bL
    >
    \dim\alpha^{\bG_x},
  \end{equation*}
  contradicting tameness. \Cref{eq:x2gx} will thus indeed be continuous if all irreducible representations are tame. 
\end{proof}

\begin{remark}\label{re:exs.cont.isotr}
  Free actions, naturally, fall into the class delineated by \Cref{cor:iffviet}, as all $\bG_x$ are then trivial. So too do homogeneous actions $\bG\circlearrowright \bG/\bH$ for closed $\bH\le \bG$, the isotropy of $gH$ being the conjugate $g\bH g^{-1}$. 
\end{remark}

\cite[Proposition 2.8.2]{wat_indexcast} can be rephrased as saying that for finite $\bG$ acting on compact $X$ with
\begin{equation}\label{eq:x1def}
  X_{(1)}:=\left\{x\in X\ |\ \bG_x=\{1\}\right\}
  \subseteq X
\end{equation}
dense and proper, $\alpha\in\Irr(\bG)$ cannot {\it all} be tame. The following consequence of \Cref{th:tameiffcontrk} generalizes and sharpens that result. 

\begin{corollary}\label{cor:watgen}
  Let $X$ be a compact Hausdorff $\bG$-space for compact $\bG$, $X_{(1)}$ defined as in \Cref{eq:x1def}, and $x\in\overline{X_{(1)}}$.

  If $\alpha\in\Irr(\bG)$ restricts non-trivially to $\bG_x$, then it is not tame.  \qedhere
\end{corollary}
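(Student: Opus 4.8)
The plan is to read this off from \Cref{th:tameiffcontrk} by exhibiting a point at which the integer-valued function $x'\mapsto\dim\alpha^{\bG_{x'}}$ fails to be locally constant; the contrapositive of that theorem then denies tameness. The offending point is of course the given $x\in\overline{X_{(1)}}$.

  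First I would pin down the two values of \Cref{eq:agxcont} that collide there. On $X_{(1)}$ the isotropy group is trivial, so $\alpha^{\bG_{x'}}=\alpha$ and the function is constantly $\dim\alpha$. At $x$ itself, decompose $\alpha\big\downarrow^{\bG}_{\bG_x}$ into $\bG_x$-irreducibles (complete reducibility for the compact group $\bG_x$); then $\alpha^{\bG_x}$ is the sum of the trivial summands, and the hypothesis that $\alpha$ restricts \emph{non-trivially} to $\bG_x$ says at least one summand is non-trivial. Hence $\alpha^{\bG_x}\subsetneq\alpha$, i.e.\ $\dim\alpha^{\bG_x}<\dim\alpha$. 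This elementary dimension drop is the only computation involved.

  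It then remains to combine the two. Since $x\in\overline{X_{(1)}}$ I would choose a net $x_{\lambda}\to x$ with every $x_{\lambda}\in X_{(1)}$, so that $\dim\alpha^{\bG_{x_{\lambda}}}=\dim\alpha$ for all $\lambda$. Any neighborhood of $x$ eventually contains this net, so no neighborhood of $x$ can carry a constant value of $x'\mapsto\dim\alpha^{\bG_{x'}}$: that value would have to be simultaneously $\dim\alpha$ (witnessed by the $x_{\lambda}$'s it contains) and the strictly smaller $\dim\alpha^{\bG_x}$ (witnessed by $x$). Thus \Cref{eq:agxcont} is not locally constant, and \Cref{th:tameiffcontrk} gives the claim.

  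I do not expect a real obstacle once \Cref{th:tameiffcontrk} is available: the argument is just the observation that points of $X_{(1)}$ clustering at $x$ force the maximal value $\dim\alpha$ of $\dim\alpha^{\bG_{\bullet}}$ arbitrarily near $x$, which a non-trivial isotropy representation at $x$ cannot match. The only points deserving a word of care are the equivalence between $\alpha$ restricting non-trivially to $\bG_x$ and the strict inequality $\dim\alpha^{\bG_x}<\dim\alpha$, and the fact that failure of local constancy at the single point $x$ already violates the (global) criterion of \Cref{th:tameiffcontrk}. Specializing to $X=\cC\bG$ with $x$ the cone tip recovers \Cref{ex:goncone}, and taking finite $\bG$ with $X_{(1)}$ dense and proper sharpens \cite[Proposition 2.8.2]{wat_indexcast}.
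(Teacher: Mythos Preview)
Your argument is correct and is precisely the approach the paper intends: the corollary is stated with a bare \texttt{\textbackslash qedhere} as an immediate consequence of \Cref{th:tameiffcontrk}, and your write-up simply spells out the one-line check that the function \Cref{eq:agxcont} jumps at $x$ (value $\dim\alpha$ on the dense-nearby $X_{(1)}$ versus the strictly smaller $\dim\alpha^{\bG_x}$ at $x$).
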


\section{Tube existence as a Lie-marking property}\label{se:tubext}

Given the noted \cite[Proposition 10.6]{hm5} upper semicontinuity of \Cref{eq:x2gx}, the triviality of $\bG_{x_0}$ means that the $\bG_x$ of nearby $x$ are contained in arbitrarily small neighborhoods of $1\in \bG$, hence \cite[Corollary 2.40]{hm5} trivial if $\bG$ is Lie (more generally, if $\bG$ is Lie then $\bG_x$ is conjugate to a subgroup of $\bG_{x_0}$ for $x\sim x_0$ \cite[Corollary II.5.5]{bred_cpct-transf}). In short, for compact $\bG$ acting on compact $X$ 
\begin{equation}\label{eq:glie}
  \bG\text{ is Lie }
  \xRightarrow{\quad}
  \left\{x\in X\ |\ \bG_x=\{1\}\right\}\subseteq X
  \text{ is open}.
\end{equation}
Nothing like this holds without the Lie constraint. In fact, for non-Lie $\bG$ the negation of \Cref{eq:glie} holds in the strongest possible form, per \Cref{th:nonlie-nonop}.

\begin{theorem}\label{th:nonlie-nonop}
  For a compact group $\bG$ the following conditions are equivalent.

  \begin{enumerate}[(a)]
  \item\label{item:th:nonlie-nonop:islie} $\bG$ is Lie.

  \item\label{item:th:nonlie-nonop:tube} Every orbit in every $T_{3\frac 12}$ (or just compact Hausdorff) $\bG$-space admits a tube. 
    
  \item\label{item:th:nonlie-nonop:h} For any compact Hausdorff $X$ acted upon by $\bG$ and any closed subgroup $\bH\le \bG$, the set
    \begin{equation*}
      X_{(\le \bH)}:=\left\{x\in X\ |\ \bG_x\text{ conjugate to a subgroup of }\bH\right\}\subseteq X
    \end{equation*}
    is open.

  \item\label{item:th:nonlie-nonop:1} For any compact Hausdorff $X$ acted upon by $\bG$ the set
    \begin{equation*}
      X_{(1)}
      :=
      \left\{x\in X\ |\ \bG_x=\{1\}\right\}\subseteq X
    \end{equation*}
    is open. 
  \end{enumerate}
\end{theorem}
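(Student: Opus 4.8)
The plan is to run the cyclic chain of implications (a) $\Rightarrow$ (b) $\Rightarrow$ (c) $\Rightarrow$ (d) $\Rightarrow$ (a). The first is nothing but the classical slice theorem recalled just above (\cite[Theorem II.5.4]{bred_cpct-transf}, \cite[Theorem I.5.7]{td_transf-gp}): a compact Lie group acting on a Tychonoff space has a tube about every orbit. For (b) $\Rightarrow$ (c), fix a closed subgroup $\bH\le\bG$ and a point $x\in X_{(\le\bH)}$, so $\bG_x$ is conjugate to a subgroup of $\bH$; choosing a tube around $\bG x$, i.e. a $\bG$-equivariant homeomorphism of $\bG\times_{\bG_x}Y$ onto an open neighbourhood $V\ni x$ with $Y$ a $\bG_x$-slice, a direct computation identifies the isotropy of $[(t,y)]\in\bG\times_{\bG_x}Y$ as $t(\bG_x)_y t^{-1}$, where $(\bG_x)_y\le\bG_x$ is the $\bG_x$-isotropy of $y$. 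Thus every point of $V$ has isotropy conjugate to a subgroup of $\bG_x$, hence --- ``conjugate to a subgroup of'' being transitive --- conjugate to a subgroup of $\bH$, so $V\subseteq X_{(\le\bH)}$ and the latter is open. Finally (c) $\Rightarrow$ (d) is the instance $\bH=\{1\}$, for which $X_{(\le\{1\})}=X_{(1)}$; the weaker implication (a) $\Rightarrow$ (d) has in any case already been isolated as \eqref{eq:glie}.

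The substance is (d) $\Rightarrow$ (a), which I would prove contrapositively by exhibiting, for non-Lie $\bG$, a compact Hausdorff $\bG$-space with non-open $X_{(1)}$. Let $\cN$ be the family --- directed by reverse inclusion --- of closed normal subgroups $\bK\trianglelefteq\bG$ with $\bG/\bK$ Lie; Peter--Weyl gives $\bigcap_{\bK\in\cN}\bK=\{1\}$, and since $\bG$ is not Lie no member of $\cN$ is trivial. Take $X$ to be the universal equivariant compactification of $Z:=\coprod_{\bK\in\cN}\bG/\bK$, with $\bG$ acting on $\bG/\bK$ by left translation through the quotient; then the $\bG$-action on $X$ is continuous, $Z$ sits equivariantly and densely in $X$, and restriction identifies $C(X)$ with the algebra of bounded continuous functions $f$ on $Z$ for which $s\mapsto s\cdot f$ is norm-continuous. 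The identity cosets $x_{\bK}:=e\bK\in\bG/\bK\subseteq X$, indexed by $\bK\in\cN$, form a net; pass to a convergent subnet $x_{\bK}\to x_\infty\in X$. Each $x_{\bK}$ has isotropy exactly $\bK\ne\{1\}$, so it suffices to show $\bG_{x_\infty}=\{1\}$: given $1\ne s\in\bG$, Peter--Weyl supplies $\bK_0\in\cN$ with $s\notin\bK_0$ and Urysohn a $\varphi\in C(\bG/\bK_0)$ with $\varphi(s\bK_0)\ne\varphi(e\bK_0)$, and pulling $\varphi$ back along the projections $\bG/\bK\to\bG/\bK_0$ on those summands with $\bK\subseteq\bK_0$ while setting $f\equiv 0$ on the rest yields such a function $f$ on $Z$, with $\|s\cdot f-f\|_\infty\le\|s\cdot\varphi-\varphi\|_{\infty,\bG/\bK_0}\to 0$ as $s\to 1$; hence $f$ determines $\bar f\in C(X)$ with $\bar f|_Z=f$. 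Since $\bK\subseteq\bK_0$ eventually along the net, $f(x_{\bK})=\varphi(e\bK_0)$ and $f(s x_{\bK})=\varphi(s\bK_0)$ from some point on, so $\bar f(x_\infty)=\varphi(e\bK_0)\ne\varphi(s\bK_0)=\bar f(s x_\infty)$ and $s x_\infty\ne x_\infty$. Therefore $x_\infty\in X_{(1)}$ while the nearby $x_{\bK}$ are not, so $X_{(1)}$ is not open. The same $X$ yields the introduction's second theorem: were it to admit tubes about all orbits, (b) $\Rightarrow$ (c) $\Rightarrow$ (d) would force $X_{(1)}$ open, hence $\bG$ Lie.

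The main obstacle is the last step's claim that the limit point $x_\infty$ is free: one has to manufacture honestly continuous separating functions on the abstract compactification $X$, and the delicate point is that the estimate witnessing $\bG$-uniform continuity must be uniform across the \emph{infinitely many} summands of $Z$ --- which is exactly why the test functions are pulled back from a single Lie quotient and extended by zero. The remaining ingredients --- the isotropy computation inside a tube, the transitivity of subconjugacy, and the standard formal properties of the universal equivariant compactification --- are routine.
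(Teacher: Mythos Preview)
Your argument is correct and follows essentially the same route as the paper's: the downward implications are handled identically (the slice theorem for (a)$\Rightarrow$(b), the tube isotropy computation for (b)$\Rightarrow$(c), specialization to $\bH=\{1\}$ for (c)$\Rightarrow$(d)), and for (d)$\Rightarrow$(a) both you and the paper build the same compact $\bG$-space --- the universal equivariant compactification of $\coprod_{\bK}\bG/\bK$ over co-Lie $\bK$ --- and locate a free point as a limit/cluster point of the identity cosets, separating it from its $s$-translates by a function pulled back from a single Lie quotient $\bG/\bK_0$ and extended by zero. The only cosmetic differences are that the paper indexes its net by origin neighborhoods rather than by the $\bK$'s themselves and speaks of cluster points rather than passing to a subnet.
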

\begin{proof}
  The downward implications are easily handled: \Cref{item:th:nonlie-nonop:islie} $\Rightarrow$ \Cref{item:th:nonlie-nonop:tube} is \cite[Theorem II.5.4]{bred_cpct-transf}, \Cref{item:th:nonlie-nonop:tube} $\Rightarrow$ \Cref{item:th:nonlie-nonop:h} follows as in the proof of \cite[Corollary II.5.5]{bred_cpct-transf}, and \Cref{item:th:nonlie-nonop:h} $\Rightarrow$ \Cref{item:th:nonlie-nonop:1} is obvious. The interesting implication is thus \Cref{item:th:nonlie-nonop:1} $\Rightarrow$ \Cref{item:th:nonlie-nonop:islie}.
  
  $\bG$ acts on each 
  \begin{equation*}
    \text{Lie quotient }\bG/\bK
    ,\quad
    \{1\}\ne \bK\trianglelefteq \bG,
  \end{equation*}
  hence also on their disjoint union and on
  \begin{equation*}
    X:=\text{{\it universal $\bG$-equivariant compactification} \cite[\S 2.8]{dvr_puc_1977} of }\coprod_{\bK}\bG/\bK=:Y.
  \end{equation*}
  Observe that the elements of $Y$ all have non-trivial isotropy groups (since we are ranging over {\it non-trivial} $\bK\trianglelefteq \bG$) and, $X$ being the closure of $Y$, the complement $X\setminus Y$ has empty interior. It remains to argue that the complement contains elements with trivial isotropy provided $\bG$ is not Lie.
  
  Recall next \cite[Corollary 2.43]{hm5} that
  \begin{equation*}
    \bG\text{ not Lie}
    \xRightarrow{\quad}
    \bG\cong \varprojlim_{\bK}\bG/\bK.
  \end{equation*}
  Ordering the neighborhoods $U\ni 1\in \bG$ by reverse-inclusion, we can construct a net
  \begin{equation*}
    U\xmapsto{\quad}1_U:=1\in \bG/\bK_U\subset Y
    \quad\text{for some}\quad
    \{1\}\ne \bK_U\trianglelefteq\bG
    \quad\text{contained in }U.
  \end{equation*}
  The {\it cluster points} \cite[Definition 11.3]{wil_top} of $(1_U)$ in $X$ (which do exist \cite[Theorem 17.4]{wil_top} by compactness) all belong to $X\setminus Y$ under the present assumption that $\bG$ is not Lie: sufficiently small neighborhoods $U\ni 1$ will fail to contain any given $\{1\}\ne \bK\trianglerighteq \bG$ with Lie quotient $\bG/\bK$, so no subnet can converge to a point in the component $\bG/\bK$ of $Y$. I claim that every such cluster point $p\in X\setminus Y$ has trivial isotropy.
  
  To see this, fix $1\ne s\in \bG$. For a sufficiently small neighborhood $U_0\ni 1\in \bG$ the image $s\in \bG/\bK_{U_0}$ is non-trivial, so we can define a bounded continuous function
  \begin{equation}\label{eq:fony}
    Y=\coprod_{\bK}\bG/\bK
    \xrightarrow{\quad f\quad}
    [0,1]
  \end{equation}
  by restricting some
  \begin{equation*}
    \bG/\bK_{U_0}
    \xrightarrow{\quad f_0\quad}
    [0,1]
    ,\quad
    f(1)=0,\ f(s)=1
  \end{equation*}
  along the surjection $\bG/\bK\xrightarrowdbl{}\bG/\bK_{U_0}$ whenever $\bK\le \bK_{U_0}$, and extending by zero to the other $\bG/\bK$. The resulting function \Cref{eq:fony}
  \begin{itemize}
  \item satisfies the uniform continuity requirements of \cite[Lemma 2.2]{dvr_puc_1977} and hence \cite[Propositions 2.6, 2.7 and 2.9]{dvr_puc_1977} extends continuously to all of $X$;

  \item vanishes at $p$ because it vanishes along the net $(1_U)_{U\supseteq U_0}$ clustering at $p$;

  \item and takes the value $1$ at $s\triangleright p$ because it does along the net
    \begin{equation*}
      (s\triangleright 1_U)_{U\supseteq U_0}
      =
      (\text{image of }s\text{ in }\bG/\bK_U)_{U\supseteq U_0}.
    \end{equation*}
  \end{itemize}
  This means that indeed $s\triangleright p\ne p$ (for {\it arbitrary} $1\ne s\in \bG$), and we are done. 
\end{proof}

In roughly the same spirit of action misbehavior for non-Lie groups, recall (\cite[Theorem II.5.8]{bred_cpct-transf}, \cite[Theorem 10.34]{hm5}) that for compact Lie $\bG$ any action on locally compact Hausdorff $X$ with {\it constant orbit type} $\bH\le \bG$ (i.e. with all stabilizers conjugate to $\bH$) results in a {\it fiber bundle} 
\begin{equation*}
  X
  \xrightarrowdbl{\quad}
  X/\bG
  \quad\text{with fiber }\bG/\bH
\end{equation*}
in the sense of \cite[\S II.1]{bred_cpct-transf}, which source in particular assumes local triviality. In particular, a free action gives a {\it principal $\bG$-bundle} with total space $X$, the term once again subsuming local triviality (in concert also with \cite[\S 14.1]{td_alg-top}, say). This is again not the case in general, for $\bG$ non-Lie: \cite[\S III]{zbMATH03184517} gives an example with compact abelian $\bG$, and a large class of examples can be produced by means of either \Cref{ex:nonlienotpb} or \Cref{le:allprofinite}.

\begin{example}\label{ex:nonlienotpb}
  Take $\bG=\prod_i \bG_i$, a product of an arbitrary infinite family of compact groups $\bG_i\ne \{1\}$. Each $\bG_i$ operates freely on the {\it $n^{th}$ truncation} $X_i:=E_{n_i}\bG_i$ of its {\it classifying space} \cite[\S 7.2.7]{hjjm_bdle} $E\bG$, so the product $\bG$ acts in the obvious fashion, freely again, on the  product $X:=\prod_i X_i$ (compact, because the $X_i$ are).
  
  Suppose the set $\{n_i\}_i$ of truncation parameters is unbounded. By \cite[\S 7.2.5]{hjjm_bdle} a local trivialization of the action would yield a $\bG$-equivariant map $X\to E_n\bG$ for some $n$, i.e. the action would have {\it finite $\bG$-index} \cite[Definition 6.2.3]{mat_bu} (as argued in \cite[Theorem 3.4]{cf1} for $\bG=\bZ/2$ for instance). This, then, would give $\bG_i$-equivariant maps
  \begin{equation*}
    E_{n_i}\bG_i
    \cong
    X_i
    \cong
    X_i\times (p_j)_{j\ne i}
    \lhook\joinrel\xrightarrow{\quad}
    X
    \xrightarrow{\quad}
    E_n\bG
    \xrightarrow{\quad}
    E_n\bG_i,
  \end{equation*}
  for fixed points $p_j\in X_j$, contradicting the Borsuk-Ulam-type \cite[Theorem 1.1]{cdt_be-hs} for $n_i>n$ (or \cite[Theorem 6.2.5]{mat_bu} for {\it finite} $\bG_i$). 
\end{example}

\begin{definition}\label{def:canrepemb}
  For a compact group $\bG$, the {\it canonical embedding} is the morphism
  \begin{equation*}
    \bG
    \lhook\joinrel\xrightarrow{\quad \iota=\iota_{\bG}:=(\alpha)_{\alpha} \quad}
    \bU
    :=
    \prod_{\alpha\in\Irr(\bG)}U(\dim\alpha)
    ,\quad
    U(n):=\text{$n\times n$ unitary group}
  \end{equation*}
  obtained by collating all individual $\alpha\in\Irr(\bG)$ regarded as maps $\bG\xrightarrow{\alpha}U(\dim\alpha)$.
  
  It is indeed an {\it embedding}, since all (locally) compact groups have ``enough'' irreducible representations \cite[Theorem 22.12]{hr-1}. 
\end{definition}

\begin{lemma}\label{le:allprofinite}
  A profinite group $\bG$ is finite if and only if its canonical embedding $\bG\lhook\joinrel\xrightarrow{\iota_{\bG}}\bU$ makes $\bU$ the total space of a principal $\bG$-bundle.
\end{lemma}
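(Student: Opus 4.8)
The plan is to prove the two implications separately; the forward one is immediate from the general theory recalled earlier, and all the content sits in the converse, which I would establish by a connectedness/cardinality obstruction.

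\emph{Finiteness gives the bundle property.} If $\bG$ is finite it is a $0$-dimensional compact Lie group, $\Irr(\bG)$ is finite, and $\bU=\prod_{\alpha}U(\dim\alpha)$ is a compact (metrizable) Lie group on which $\bG$ acts by $g\cdot u:=\iota_{\bG}(g)u$. This action is free because $\iota_{\bG}$ is injective, so $\bU\to\bU/\bG$ is a principal $\bG$-bundle by the fact recalled just before \Cref{ex:nonlienotpb} that free actions of compact Lie groups on locally compact Hausdorff spaces yield principal bundles; alternatively, a free action of a finite group on a Hausdorff space is properly discontinuous, so $\bU\to\bU/\bG$ is a covering map, i.e. a principal $\bG$-bundle with discrete structure group. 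No further work is needed here.

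\emph{The bundle property gives finiteness.} Suppose, aiming at a contradiction, that $\bG$ is infinite and $\bU\to\bU/\bG$ is a principal $\bG$-bundle. Local triviality then provides a non-empty open subset $W\subseteq\bU$ (the preimage of a trivializing open set in $\bU/\bG$) and a homeomorphism $W\cong\bG\times N$ with $N\neq\emptyset$. I would exploit two standard topological features of $\bU$: it is \emph{locally connected}, being a product of the connected spaces $U(\dim\alpha)$; and it satisfies the \emph{countable chain condition}, being a product of the separable spaces $U(\dim\alpha)$ --- here one invokes the classical $\Delta$-system argument showing that every product of separable spaces is ccc. Local connectedness makes the connected components of the open set $W$ open in $\bU$, and then the ccc forces there to be only countably many of them. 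On the other hand $\bG$, being profinite and infinite, is totally disconnected and uncountable (infinite compact groups are uncountable, e.g. because Haar measure is countably additive, or because they have no isolated points), so inside $\bG\times N$ every connected subset lies in a single slice $\{g\}\times N$, whence --- $N$ being non-empty --- there are at least $|\bG|>\aleph_0$ connected components. Transporting along $W\cong\bG\times N$ contradicts the countability just obtained.

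I expect the only delicate point to be this last one: local triviality yields merely a homeomorphism $W\cong\bG\times N$ with $W$ open, and the slices $\{g\}\times N$ are themselves \emph{not} open in $\bG\times N$, so one cannot conclude directly that ``$\bG\times N$ violates ccc''; the resolution is that openness of the components is recovered \emph{after} transporting back to the ambient $\bU$, which is precisely where local connectedness and the ccc of $\bU$ both enter. The three auxiliary facts (ccc of arbitrary products of separable spaces, local connectedness of products of connected spaces, uncountability of infinite compact groups) are routine and would only need to be cited. I would also note in passing that the argument in fact proves the sharper statement that for infinite profinite $\bG$ the map $\bU\to\bU/\bG$ is not even locally trivial, let alone a principal bundle.
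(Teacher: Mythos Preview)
Your proof is correct. The forward direction matches the paper's, and your converse is valid, but the paper's route is more economical. Both arguments hinge on the local connectedness of $\bU=\prod_{\alpha}U(\dim\alpha)$ (which, to be precise, follows because each factor is connected \emph{and} locally connected; connectedness alone would not suffice). From there the paper simply observes that a nonempty open subset of a locally connected space must contain a nonempty connected open subset, whereas $\bG\times V$ has \emph{no} connected open subsets whatsoever when $\bG$ is infinite profinite: any nonempty open set projects onto an open subset of $\bG$ with more than one point, and total disconnectedness of $\bG$ then disconnects it. This finishes the argument without invoking the ccc of $\bU$ or the uncountability of infinite compact groups. Your approach instead \emph{counts} the components of $W\cong\bG\times N$: local connectedness makes them open, ccc (via the $\Delta$-system/Marczewski argument for products of separable spaces) bounds them by $\aleph_0$, and uncountability of $\bG$ forces at least $|\bG|>\aleph_0$ of them. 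That works, and your final paragraph correctly identifies why the ccc must be applied in $\bU$ rather than in $\bG\times N$ directly; but the two extra ingredients (ccc, uncountability) are avoidable, and the paper's argument in fact goes through for any infinite totally disconnected closed subgroup, not just profinite ones of size $>\aleph_0$.
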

\begin{proof}
  The direction ($\Rightarrow$) is of course \cite[Theorem 10.34]{hm5} valid for arbitrary Lie groups, while the converse ($\Leftarrow$) is what the argument in the paragraph following \cite[Theorem 1]{most_sect} (ostensibly concerned with infinite profinite subgroups of infinite-dimensional tori $\left(\bS^1\right)^{S}$ for sets $S$) in fact proves: the product $\bU$ in \Cref{def:canrepemb} is connected and locally connected, so it cannot have open subsets homeomorphic to
  \begin{equation*}
    V\times \bG
    ,\quad
    V\subseteq \bU/\bG\text{ open and }
    \bG\text{ infinite profinite}
  \end{equation*}
  (because such sets in turn have no connected open subsets). 
\end{proof}

The construction in the proof of \Cref{th:nonlie-nonop} provides a natural free $\bG$-space to work with; it will turn out to be trivial (i.e. of the form $\bG\times Y$ with left-hand-factor action), but this is perhaps not immediately obvious in the non-Lie case. In any event, we take a detour to examine the matter.

For compact $\bG$ we write 
\begin{itemize}
\item $\beta_{\bG}X$ for the above-mentioned \cite[\S 2.8]{dvr_puc_1977} (universal) $\bG$-equivariant compactification of a Tychonoff $\bG$-space (so that plain $\beta X:=\beta_{\{1\}}X$ is the usual {\it Stone-\v{C}ech compactification} of $X$ \cite[Definition 19.4]{wil_top});

\item and $\gamma_{\bG}X := \beta_{\bG}X\setminus X$ for what we term the $\bG$-equivariant {\it corona} of the $\bG$-space $X$ (following the analogous non-equivariant terminology in \cite[Introduction, p.125]{zbMATH03857961}, say).
\end{itemize}

\begin{definition}\label{def:orbspace}
  Let $\bG$ be a compact group.
  
  \begin{enumerate}[(1)]
  \item A closed normal $\bK\trianglelefteq \bG$ is {\it co-Lie} if the quotient $\bG/\bK$ is Lie. 
    
  \item The {\it Lie $\bG$-space} (or {\it Lie (action) space of $\bG$}) is
    \begin{equation*}
      \Lambda(\bG):=\coprod_{\text{co-Lie }\bK\trianglelefteq \bG} \bG/\bK,
    \end{equation*}
    equipped with the obvious $\bG$-action on each $\bG/\bH$. 

  \item The {\it compactified} Lie $\bG$-space is the universal $\bG$-equivariant compactification $\beta_{\bG}\Lambda(\bG)$.
  \end{enumerate}  
\end{definition}

\begin{proposition}\label{pr:freeliegspace}
  Let $\bG$ be a compact group. The subspace
  \begin{equation*}
    \beta_{\bG}\Lambda(\bG)_{(1)}
    :=
    \left\{x\in \beta_{\bG}\Lambda(\bG)\ |\ \bG_x=\{1\}\right\}
    \subseteq
    \beta_{\bG}\Lambda(\bG)
  \end{equation*}
  is closed and trivial as a $\bG$-space. Specifically, writing
  \begin{equation*}
    \left(\cCL(\bG),\ \preceq\right)
    :=
    \left(\text{co-Lie  closed normal }\bK\trianglelefteq \bG,\quad \text{reverse inclusion}\right),
  \end{equation*}
  we have a $\bG$-equivariant isomorphism $\beta_{\bG}\Lambda(\bG)_{(1)}\cong \bG\times Y$ for
  \begin{equation}\label{eq:yincorona}
    Y
    :=
    \left\{\text{cluster points of the net }(\bK)_{\bK}\subset \cCL(\bG)\right\}
    \subseteq \beta\cCL(\bG).
  \end{equation}
\end{proposition}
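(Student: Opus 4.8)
Throughout write $\cM := \beta_{\bG}\Lambda(\bG)$ for the compactified Lie $\bG$-space, let $\Lambda(\bG)_{(1)}$ denote its free locus $\{x \in \cM : \bG_x = \{1\}\}$, put $D := \cCL(\bG)$ with its discrete topology, and for $\bK \in D$ let $1_{\bK}$ be the base point $\bK \in \bG/\bK \subseteq \Lambda(\bG)$; since $\bK$ is normal, $\bG_{1_{\bK}} = \bK$. The plan is to realize $\Lambda(\bG)_{(1)}$ as the image of the closed subset $\bG \times Y$ of the auxiliary compact $\bG$-space $\bG \times \beta D$. First I would record two tautological $\bG$-equivariant continuous maps ($\bG$ acting trivially on $D$, by translation on the $\bG$-factor): the orbit map $\pi \colon \Lambda(\bG) \to D$ collapsing $\bG/\bK$ to $\bK$, and the base-pointed parametrization $q \colon \bG \times D \to \Lambda(\bG)$, $(g, \bK) \mapsto g \cdot 1_{\bK}$, which is surjective. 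Since $\beta_{\bG}(\bG \times D) = \bG \times \beta D$ (immediate from the universal property, as $\bG$ is compact), $q$ and $\pi$ extend, by the universal property of $\beta_{\bG}(-)$, to continuous equivariant maps $\bar q \colon \bG \times \beta D \to \cM$ and $\bar \pi \colon \cM \to \beta D$. Here $\bar q$ is onto (its image is compact and dense), $\bar \pi \circ \bar q$ is the projection $\bG \times \beta D \to \beta D$ (both extend $\mathrm{pr}_2$ on $\bG \times D$), and $Y \subseteq \beta D$ is closed, as the set of cluster points of a net always is.

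The technical heart is two isotropy computations. First, for $p \in Y$ I would show $\bG_{\bar q(1, p)} = \{1\}$. Pick a subnet $(\bK_{\lambda})$ of the identity net $(\bK)_{\bK \in (D, \preceq)}$ with $\bK_{\lambda} \to p$ in $\beta D$; cofinality with respect to reverse inclusion means the $\bK_{\lambda}$ are eventually contained in any prescribed $\bK_0 \in D$, and $\bigcap_{\bK \in D} \bK = \{1\}$ always (trivially if $\bG$ is Lie, and by \cite[Corollary 2.43]{hm5} otherwise). Fix $1 \ne s \in \bG$, choose $\bK_0 \in D$ with $s \notin \bK_0$, and build, exactly as in the proof of \Cref{th:nonlie-nonop}, a bounded continuous $f \colon \Lambda(\bG) \to [0, 1]$ that is the pullback of a Urysohn function on $\bG/\bK_0$ separating $1\bK_0$ from $s\bK_0$ along $\bG/\bK \to \bG/\bK_0$ on the components with $\bK \subseteq \bK_0$, and $\equiv 0$ elsewhere; the same uniform-continuity estimate \cite[Lemma 2.2, Propositions 2.6, 2.7 and 2.9]{dvr-puc} extends $f$ to $\bar f \colon \cM \to [0, 1]$. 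Then $f(1_{\bK_{\lambda}}) = 0$ and $f(s \cdot 1_{\bK_{\lambda}}) = 1$ for $\lambda$ large, so passing to the limit gives $\bar f(\bar q(1, p)) = 0 \ne 1 = \bar f(s \cdot \bar q(1, p))$, whence $s \notin \bG_{\bar q(1, p)}$. Second, conversely: if $\bar q(1, p)$ is free then $p \in Y$. If not, some net $\bK_{\lambda} \to p$ has no $\bK_{\lambda}$ contained in a fixed $\bK_0 \in D$; choosing $s_{\lambda} \in \bK_{\lambda} \setminus \bK_0$ and passing to a convergent subnet $s_{\lambda} \to s \notin \bK_0$ (so $s \ne 1$), the identities $s_{\lambda} \cdot 1_{\bK_{\lambda}} = 1_{\bK_{\lambda}}$ pass to the limit, by joint continuity of the action, to $s \cdot \bar q(1, p) = \bar q(1, p)$.

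It remains to assemble. Equivariance of $\bar q$ together with the first computation gives $\bar q(\bG \times Y) \subseteq \Lambda(\bG)_{(1)}$; conversely a free $x = \bar q(g, p)$ has $g^{-1} x = \bar q(1, p)$ free, so $p \in Y$ by the second computation, and thus $\Lambda(\bG)_{(1)} = \bar q(\bG \times Y)$ — in particular $\Lambda(\bG)_{(1)}$ is closed in $\cM$, being the continuous image of the compact set $\bG \times Y$ ($Y$ is closed in the compact $\beta D$). For injectivity of $\bar q|_{\bG \times Y}$: if $\bar q(g_1, p_1) = \bar q(g_2, p_2)$ then applying $\bar \pi$ forces $p_1 = p_2 =: p$, and then $g_2^{-1} g_1 \in \bG_{\bar q(1, p)} = \{1\}$ by the first computation. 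So $\bar q|_{\bG \times Y}$ is a continuous equivariant bijection from the compact space $\bG \times Y$ onto the Hausdorff space $\Lambda(\bG)_{(1)}$, hence a homeomorphism — the desired $\bG$-equivariant trivialization.

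I expect the single real obstacle to be the first isotropy computation: one must produce scalar functions on $\Lambda(\bG)$ that simultaneously separate each $1_{\bK}$ from its $s$-translates and survive extension to $\beta_{\bG}\Lambda(\bG)$ (where, unlike for ordinary Stone--\v{C}ech, not every bounded continuous function extends). This is precisely the device already deployed in the proof of \Cref{th:nonlie-nonop}, so it transfers essentially verbatim; the remaining steps are soft, using only the universal property of $\beta_{\bG}(-)$, compactness, and normality of the $\bK$'s.
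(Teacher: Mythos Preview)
Your approach via the auxiliary surjection $\bar q\colon \bG\times\beta D\to \cM$ together with the retraction $\bar\pi\colon \cM\to\beta D$ is more unified than the paper's, which splits into Lie and non-Lie cases and in the latter leaves the injectivity of its map $\bG\times Y\to \Lambda(\bG)_{(1)}$ (the paper's \Cref{eq:gyinl1}) essentially unargued. Your $\bar\pi$ makes that injectivity immediate (apply $\bar\pi$ to recover the $Y$-coordinate, then use freeness for the $\bG$-coordinate), and the first isotropy computation is, as you note, a direct transplant of the device from the proof of \Cref{th:nonlie-nonop}.

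There is, however, a real gap in the second isotropy computation. From $s_\lambda\in\bK_\lambda\setminus\bK_0$ and $s_\lambda\to s$ you infer $s\notin\bK_0$, but $\bK_0$ need not be open in $\bG$: co-Lie means the \emph{quotient} $\bG/\bK_0$ is Lie, not that it is discrete. Concretely, take $\bG=\bS^1$, $\bK_0=\{1\}$, $\bK_\lambda=\bZ/n_\lambda$ with $n_\lambda\to\infty$, and $s_\lambda=e^{2\pi i/n_\lambda}\in\bK_\lambda\setminus\{1\}$; then $s_\lambda\to 1$. The repair is to choose $s_\lambda$ more carefully, exploiting the fact that the Lie group $\bG/\bK_0$ has no small subgroups \cite[Corollary 2.40]{hm5}: the non-trivial subgroup $\bK_\lambda\bK_0/\bK_0\le\bG/\bK_0$ must meet the complement of some fixed identity neighborhood $V$, so one can lift to $s_\lambda\in\bK_\lambda$ with image in $(\bG/\bK_0)\setminus V$; any subnet limit $s$ then satisfies $s\bK_0\ne\bK_0$, i.e.\ $s\notin\bK_0$ as needed. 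This is precisely the no-small-subgroups device the paper invokes explicitly in its Lie case (and tacitly again in its non-Lie case, when passing from Vietoris convergence $\bK_\lambda\to\{1\}$ to membership of the limit point in $Y$).
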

\begin{proof}
  There are two qualitatively distinct cases to consider.

  \begin{enumerate}[(I),wide=0pt]
  \item {\bf $\bG$ is Lie.} The poset $(\cCL(\bG),\preceq)$ (ordered, recall, by {\it reverse} inclusion $\preceq\ :=\ \supseteq$) has a largest element $\{1\}\le \bG$. The only cluster point of the net $(\bK)_{\bK\in\cCL(\bG)}$ in the sense of \cite[Definition 11.3]{wil_top} is then $\{1\}$ itself, and the statement is interpreted as the claim that
    \begin{equation*}
      \beta_{\bG}\Lambda(\bG)_{(1)}\cong \bG
      \quad
      \text{$\bG$-equivariantly}.
    \end{equation*}
    More precisely, I claim that in this case $\beta_{\bG}\Lambda(\bG)_{(1)}$ is exactly the component $\bG\cong \bG/\{1\}$ of $\Lambda(\bG)$. Since of course no points in any other $\bG/\bK\subseteq \Lambda(\bG)$, $\bK\ne \{1\}$ have trivial isotropy, an equivalent formulation is that (for Lie $\bG$)
    \begin{equation*}
      \beta_{\bG}\Lambda(\bG)_{(1)}
      \cap\left(\text{$\bG$-equivariant corona }\gamma_{\bG}\Lambda(\bG)\right)
      =\emptyset.
    \end{equation*}
    To verify this, consider a convergent net
    \begin{equation*}
      \Lambda(\bG)\supseteq 
      \bG/\bK_{\alpha}\ni
      p_{\alpha}
      \xrightarrow[\alpha\nearrow]{\qquad}
      p\in \gamma_{\bG}\Lambda(\bG)
      \quad\text{for}\quad
      \bK_{\alpha}\in \cCL(\bG).
    \end{equation*}
    The right-hand ambient space in the following display being compact \cite[Theorem 1 and remark (IV) following it]{fell_haus-top}, the net
    \begin{equation*}
      (\bK_{\alpha})_{\alpha}
      \subset
      \left\{\text{closed normal }\bK\le \bG\right\}
      \text{ with the Vietoris topology}
    \end{equation*}
    has a cluster point $\bK\trianglelefteq \bG$. There is an origin neighborhood $U\ni 1\in \bG$ containing no non-trivial subgroups ($\bG$ has {\it no small subgroups} \cite[Corollary 2.40]{hm5}, being Lie). Because the $p_{\alpha}$ converge to a point {\it outside} $\Lambda(\bG)$,
    \begin{equation*}
      \bK_{\alpha}\cap (\bG\setminus U)\ne\emptyset
      \text{ for large }\alpha
      \ \xRightarrow{\quad}\ 
      \bK\cap (\bG\setminus U)\ne\emptyset
      \ \xRightarrow{\quad}\ 
      \bK\ne \{1\}.
    \end{equation*}
    But then $\{1\}\ne \bK\le \bG_p$, and we are done. 
    
  \item {\bf $\bG$ is not Lie.} This time around the co-Lie $\bK\trianglelefteq \bG$ are all non-trivial, the points in $\bG/\bK\subset \Lambda(\bG)$ all have $\bK$-isotropy, so trivial-isotropy points, if any, {\it must} belong to the corona $\gamma_{\bG}\Lambda(\bG)$. 
    
    First identify the space $Y$ of \Cref{eq:yincorona} with the subspace of $\gamma_{\bG}\Lambda(\bG)$ consisting of cluster points of the net $(1_{\bG/\bK})_{\bK\in\cCL(\bG)}$. We have already argued in the proof of \Cref{th:nonlie-nonop} that $Y\subseteq \Lambda(\bG)_{(1)}$. We now have an embedding
    \begin{equation}\label{eq:gyinl1}
      \bG\times Y
      \ni
      (s,y)
      \xmapsto[\quad\cong\quad]{}
      sy
      \in
      \left(\coprod_{s\in \bG}\text{cluster points of }(s\bK)_{\bK}\right)
      \subseteq
      \beta_{\bG}\Lambda(\bG)_{(1)},
    \end{equation}
    and it will suffice to argue that that embedding is onto. Consider, then, a convergent net
    \begin{equation*}
      \bG/\bK_{\lambda}
      \ni
      s_{\lambda}\bK_{\lambda}
      \xrightarrow[\quad\lambda\quad]{}
      x\in \beta_{\bG}\Lambda(\bG)_{(1)}.
    \end{equation*}
    Because $x$ has trivial isotropy, the net $(\bK_{\lambda})_{\lambda}$ must (Vietoris-)converge to $\{1\}$. Next, convergence in $\beta_{\bG}\Lambda(\bG)$ also entails (as in the proof of \Cref{th:nonlie-nonop}, via \cite[Lemma 2.2]{dvr_puc_1977}) the convergence of $(s_{\lambda})$ to some $s\in \bG$. But then $x$ belongs to the image of $\{s\}\times Y$ through \Cref{eq:gyinl1}, completing the proof. 
  \end{enumerate}
\end{proof}

\addcontentsline{toc}{section}{References}



\Addresses

\end{document}